\def \kkk{\color{black}}
\def \R{\mathbb{R}}
\def \L{\mathcal{L}}
\def \E{\mathcal{E}}
\def \I{\mathcal{I}}
\def \S{\mathcal{S}}
\def \B{\mathcal{B}}
\def \I{\mathcal{I}}
\def \Z{\mathbb{Z}}
\def \V{\mathcal{V}}
\def \a{\alpha}
\def \b{\beta}
\def \lam{\lambda}
\def \s{\sigma}
\def \St{\S_{\tau}}
\def \1{\mathbf{1}}
\newtheorem{definition}{Definition}
\newtheorem{lem}{Lemma}
\newtheorem{cor}{Corollary}
\newtheorem{rem}{Remark}
\newtheorem{thm}{Theorem}
\title{\LARGE \bf
Domain of attraction of saturated switched systems under dwell-time switching
}
\author{Masood Dehghan
\thanks{This work was supported by the Agency for Science, Technology and Research (A*STAR) Funds  under Grant No. R-261-506-005-305).}
\thanks{M. Dehghan is with the department of Mechanical Engineering, National University of Singapore, 117576, Singapore. Email:
        {\tt\small (masood@nus.edu.sg)}}%
}
\begin{document}

\maketitle
\thispagestyle{empty}
\pagestyle{empty}

\begin{abstract}

This paper considers discrete-time switched systems  under dwell-time switching and in the presence of saturation nonlinearity.
Based on Multiple Lyapunov Functions  and using polytopic representation of nested saturation functions, a sufficient condition for asymptotic stability of such systems is derived. It is shown that this condition is equivalent to linear matrix inequalities (LMIs) and as a result, the estimation of domain of attraction is formulated into a convex  optimization problem with LMI constraints.
Through numerical examples, it is shown that the proposed  approach is less conservative than the others in terms of both minimal dwell-time needed for stability and the size of the obtained domain of attraction. \\

\end{abstract}

%


This paper considers the computation of  domain of attraction (DOA) of
discrete-time switched systems with saturation nonlinearity in the form of
\begin{align}
\label{eqn:switch-sat}
\left\{\begin{array}{l}
x(t+1) = A_{\s(t)} \, x(t) + B_{\s(t)} sat ( u(t) ) \\
\;\; \, \quad u(t) =  K_{\s(t)} \, x(t)
\end{array}\right.
\end{align}
where, $x \in \mathbb R^n$,  $u \in \mathbb R^m$  are the state and control variables respectively.
$\sigma(t) :  \Z^+ \rightarrow  \I_N:=\{1,\cdots, N\}$ is also a time-dependent switching
signal that indicates the current active mode of the system among $N$
possible modes in $\I_{N}$.
Symbol
$sat(\cdot)$ is the standard vector-valued saturation function, i.e., $sat(u) = [ sat(u_{1}) , \cdots, sat(u_{m}) ]^{T}$, with
$sat(u_{j}) = sgn(u_{j}) \min \{ 1, |u_{j}| \}$.
Without loss of generality, the   saturation limit  is normalized to one, by appropriately scaling the $B_{\s}$ and $K_{\s}$ matrices.

The study of switched systems has been quite active in the past decade due to their potential in modeling of
many practical real-life systems (see e.g.  car transmission systems \cite{Johansson-2004}, multiple-controller systems \cite{Morse1996}, genetic regulatory networks \cite{Jong-2004}, etc).
Most of the literature of the switched systems
is concerned with conditions that ensure stability of the
 system (\ref{eqn:switch-sat}) in the absence of saturation and when $\sigma(\cdot)$ is an arbitrary switching function \cite{switched_Lyapunov2002,Blanchini20072,Hua2010}.
Others consider stability of
switched systems when $\sigma(\cdot)$ satisfies some dwell-time restrictions \cite{Zhai2002,our11,Blanchini-2010,our33,Geromel20062,Chesi2012,our22}.

Since most of the physical actuators/sensors are subject to hardware limitations, presence of control saturation is always inherent to control systems, which may cause stability loss and performance degradation.
Moreover, computation and characterization of DOA  of such systems is specially challenging as their DOA is known to be
generally non-convex  \cite{book_saturation,Sun2007}.  Thus, estimation of DOA of switched systems in the presence of saturation nonlinearity  has received the attention of many researchers (see, e.g., \cite{Benzaouia2004,Benzaouia2006,Lu2008,Dehghan-sat}).

%


While several approaches have been proposed to handle saturation nonlinearity, two of them appear promising.
The first approach is to describe the saturation nonlinearity as a local sector bound nonlinearity with different multipliers
(see, e.g. \cite{Khalil2002,Tarbouriech2006}). Then, the S-procedure is used to derive sufficient conditions for stability of the resulted nonlinear system. The second approach,
is based on the polytopic representation of saturation nonlinearity \cite{Dasilva2001,Hu-Lin-saturation2002,Hu-Lin-discrete-saturation2002}, in which
 the  saturation function is represented as a linear differential/difference inclusion (LDI). With this representation, conventional tools designed  for linear systems can be used for saturated systems.
It has been realized that the second approach generally leads to less conservative results \cite{Zhou2011}.
Although the above mentioned approaches have been applied for switched systems under arbitrary switching
(see e.g. \cite{Benzaouia2004,Benzaouia2006,Lu2008,Jungers2011}),
  the extension of these methods for switched systems under dwell-time switching is not trivial due the complex structure of switching sequences that satisfy the dwell-time restriction.
  To the best of our knowledge there are very few results on such systems \cite{Ni-2010,Chen-2011}.

This paper  presents an LDI-based approach for  computation
of DOA of system (\ref{eqn:switch-sat}) when
$\sigma(\cdot)$ is a switching function that satisfies
the dwell-time restriction.
We formulate the problem into an optimization  with linear matrix inequalities (LMI) constraints that 
asymptotically stabilizes system (\ref{eqn:switch-sat}) and  at the same time enlarges its DOA. \kkk
We show that our result is less conservative than the others in terms of both minimal dwell-time needed for  stability  and the size of the obtained DOA.

In the limiting
case, where the dwell-time is one sample period, $\sigma(\cdot)$
becomes an arbitrary switching function, and our method retrieves the results of arbitrary switched systems appeared in the literature (see, e.g., \cite{Benzaouia2004,Benzaouia2006}).
Hence, this
work can also be seen as a generalization of those obtained for
arbitrary switched systems.

\kkk
The rest of this paper is organized as follows. This section ends
with a description of the notations used. Section \ref{sec:Preliminary}
reviews some standard terminology and preliminary results for switched
systems. Section \ref{sec:LDI} presents  the main results including the LMI formulation of the problem.
 Sections \ref{sec:examaple}  and  \ref{sec:conclusion} contain, respectively, numerical examples and conclusions.

The following notations are used. $\Z^+$ is the set of
non-negative integers. Given an integer $m \ge 1$, define
$\V_m:= \{S: S \subseteq  \{1,...,m\} \}$  as the set of all subsets of
$\{1,...,m\}$.
Clearly, $\{\emptyset\} \in \V_{m}$ and there are $2^{m}$ elements in the set $\V_{m}$.
Also let
 $S^c = \{j \in \{1,...,m\} : j \notin S\}$ to be the complement of $S$ in
$ \{1,...,m\}$.
Given $a>0$, the floor function $\lfloor a \rfloor$ is the largest integer that is less than $a$.
The $p$-norm
of a vector or a matrix is $\|\cdot\|_p, p=1,2,\infty$ with
$\|\cdot\|$ refers to the 2-norm and
  $\B_r := \{ x \in \R^n: \| x \| \le r \}$ is a norm ball with radius $r$.
Given a matrix $Y \in \R^{m \times n}$,
$Y^{i  \bullet }$ is the $i$-th row and  $Y^{\bullet j}$ is the $j$-th column of $Y$ and $\L(Y):= \{x : \| Yx \|_{\infty} \le 1 \}= \{ x : |Y^{i \bullet} x| \le 1 , \forall i=1 , \cdots , m  \}$.
The transpose of a vector/matrix $X$ is denoted by $X^T$ and
$I_m$ is the $m \times m$ identity matrix.
Positive definite (semi-definite) matrix, $P \in \R^{n
\times n}$, is indicated by $P \succ 0 (\succeq 0)$, $\E (P) := \{ x : x^{T} P x \le 1 \}$ and $\lambda_{\max} (P)$, $\lambda_{\min} (P)$ denote respectively the maximum and minimum eigenvalues of $P$.
Other notations are introduced when they are needed.

\section{Preliminaries}
\label{sec:Preliminary}

This section begins with the standard definitions of systems under dwell-time switching and  assumptions on the system, followed by
 preliminary stability results.


\begin{definition} \label{def:admissiblesequence}
Let a switching sequence of (\ref{eqn:switch-sat}) be denoted by  $\S(t)= \left\{ \s(t-1),\cdots,\s(1), \s(0) \right\}$
with switching instants at $t_{ 0},t_{ 1}, \cdots ,t_{ k}, \cdots$ with $t_{ 0} = 0$ and $t_{ k} < t_{ {k+1}}$.
 System (\ref{eqn:switch-sat}) has a dwell-time of $\tau$ if $t_{ {k+1}} - t_{ k} \ge \tau$ for all $k\in \Z^+$. In addition,
any switching sequence that satisfies this condition is
said to be dwell-time admissible (DT-admissible) with dwell-time $\tau$ and is denoted by $\St$.
\end{definition}

System (\ref{eqn:switch-sat}) is
assumed to satisfy the following assumptions:
\textbf{(A1)}  $A_i+B_{i}K_{i}$ is discrete-time Hurwitz for all $i \in \I_N$;
\textbf{(A2)} A value of $\tau \ge 1$ has been identified such that the unsaturated switched system (\ref{eqn:switch-sat}) is asymptotically stable with dwell-time $\tau$.

Assumption (A1) defines the family of systems considered in this work and is a reasonable requirement.
The presence of a minimal dwell-time that ensure asymptotic stability of system (\ref{eqn:switch-sat}) is well-known \cite{Zhai2002,our11}.
Hence, assumption (A2) is made out of convenience and poses no restriction.
In addition, it is assumed that there is no control on the  switching rule by the user, except that
the switching rule  satisfies the dwell-time consideration.
\kkk

In order to provide stability conditions for system (\ref{eqn:switch-sat}), 
additional notations  are required.
Consider the $i$-th mode of (\ref{eqn:switch-sat}).
Then the successor state of $x$, $F_{i}(x)$, under mode $i$ is
\begin{align}
\label{eqn:Fi}
F_{i}(x) = A_{i} \, x  + B_{i}  \, sat ( K_{i} x ).
\end{align}
Repeating the above leads to
\begin{align}
F_{i}^{2} (x) &= F_{i} ( F_{i} (x)) = A_{i} F_{i}(x)  + B_{i} \, sat ( K_{i} F_{i}(x) )  \nonumber \\
& \;\; \vdots \nonumber \\
 F_{i}^{t} (x) &=  F_{i} ( F_{i}^{t-1} (x)) = F_{i} ( F_{i} ( \cdots  F_{i} (x) ) )  \label{eqn:Fi-t}
\end{align}
where $F_{i}^{t}(x)$ is the state evolution of (\ref{eqn:switch-sat}) after $t$-steps with $x(0)=x$ and
$\St(t)= \{ i , i ,\cdots, i\}$.
Using this definition, the following result which is based on the Multiple Lyapunov Functions (MLFs) provides a sufficient condition for asymptotic stability of the origin of system (\ref{eqn:switch-sat}).

\begin{thm}
\label{thm:1}
Assume that, for some $\tau\ge1$, there exists a collection of positive definite matrices  $P_i \succ 0$ for each  $i \in \I_N$ such that
\begin{align}
&\big( F_i(x) \big) ^T P_i \, \big( F_i(x) \big) - x^T P_i \,x  < 0  \nonumber \\
& \qquad \qquad \qquad \qquad \qquad \qquad \qquad \quad  \forall x \neq 0, \forall i \in \I_N    \label{eqn:C1}\\
&\big( F_i^{\tau}(x) \big)^T P_j \, \big( F_i^\tau(x) \big) < x^T P_i \, x    \nonumber \\
& \qquad \qquad \qquad \qquad \forall x \neq 0, \forall (i,j) \in \I_N\times \I_N , i \neq j  \label{eqn:C2}
\end{align}
Then, the equilibrium solution $x = 0$ of saturated switched system (\ref{eqn:switch-sat})
 is globally asymptotically stable  with dwell-time $\tau$.
\end{thm}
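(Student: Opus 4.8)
The plan is to build a multiple Lyapunov function from the collection $\{P_i\}$ and to monitor its value at the switching instants. Define $V_i(x) := x^T P_i x$ for each $i \in \I_N$, and set $\underline{\lam} := \min_{i} \lam_{\min}(P_i)$ and $\bar{\lam} := \max_{i} \lam_{\max}(P_i)$, so that $\underline{\lam}\|x\|^2 \le V_i(x) \le \bar{\lam}\|x\|^2$ for every mode; positivity of each $P_i$ together with finiteness of $\I_N$ makes these bounds uniform. Condition (\ref{eqn:C1}) states precisely that $V_i$ strictly decreases along the flow of mode $i$, i.e. $V_i(F_i(x)) < V_i(x)$ for $x\neq 0$, and iterating gives $V_i(F_i^s(x)) \le V_i(x)$ for all $s\ge 0$, strictly for $s\ge 1$.

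First I would establish a strict decrease of the Lyapunov value across each switch. Fix a DT-admissible sequence $\St$ with switching instants $t_0 = 0 < t_1 < \cdots$, write $x_k := x(t_k)$, $i := \s(t_k)$, $j := \s(t_{k+1})$ with $i\neq j$, and $d_k := t_{k+1}-t_k \ge \tau$. Using the semigroup property $F_i^{d_k} = F_i^{\tau}\circ F_i^{d_k-\tau}$ and setting $y := F_i^{d_k-\tau}(x_k)$, condition (\ref{eqn:C2}) yields $V_j(x_{k+1}) = V_j(F_i^{\tau}(y)) < V_i(y)$, while (\ref{eqn:C1}) gives $V_i(y) = V_i(F_i^{d_k-\tau}(x_k)) \le V_i(x_k)$. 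Hence $V_j(x_{k+1}) < V_i(x_k)$, so the switching-instant sequence $W_k := V_{\s(t_k)}(x_k)$ is strictly decreasing. Combined with the within-mode monotonicity, the function $W(t) := V_{\s(t)}(x(t))$ is non-increasing for all $t$, and the norm bounds then give $\|x(t)\| \le \sqrt{\bar{\lam}/\underline{\lam}}\,\|x(0)\|$, which is Lyapunov stability and confines the trajectory to a ball $\B_R$.

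The main obstacle is attractivity, since the strict inequalities in (\ref{eqn:C1})--(\ref{eqn:C2}) are not a priori uniform and the dwell times $d_k$ may be unbounded. To close this I would extract a uniform contraction factor on $\B_R$: for each ordered pair $i\neq j$ the ratio $q_{ij}(y) := V_j(F_i^{\tau}(y))/V_i(y)$ is continuous on $\B_R \setminus \{0\}$ and strictly below $1$ by (\ref{eqn:C2}). Near the origin the saturation is inactive, so $F_i$ coincides with the Hurwitz linear map $A_i+B_iK_i$ (assumption (A1)) and $q_{ij}$ is $0$-homogeneous, extending continuously to the sphere with supremum $<1$; on any annulus $r \le \|y\| \le R$ compactness gives a maximum $<1$. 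Taking $\rho := \max_{i\neq j}\sup_{0<\|y\|\le R} q_{ij}(y) < 1$ and reusing the factorization above yields $W_{k+1} \le \rho\,W_k$, hence $W_k \le \rho^k W_0 \to 0$ whenever there are infinitely many switches, and the within-mode decay then forces $x(t)\to 0$. The complementary case of finitely many switches reduces to a single mode on $[t_{\bar k},\infty)$, where the same compactness argument applied to (\ref{eqn:C1}) (or a LaSalle-type argument) gives convergence. Together with the Lyapunov stability of the previous paragraph, this establishes global asymptotic stability with dwell-time $\tau$.
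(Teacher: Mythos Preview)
Your argument follows the same multiple--Lyapunov--function line as the paper's: define $V_i(x)=x^TP_ix$, use (\ref{eqn:C1}) for strict decrease within each mode, factor $F_i^{d_k}=F_i^{\tau}\circ F_i^{d_k-\tau}$ and invoke (\ref{eqn:C2}) to obtain $V_{\s(t_{k+1})}(x(t_{k+1}))<V_{\s(t_k)}(x(t_k))$ at switching instants, then conclude.

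One slip: the claim that $W(t)=V_{\s(t)}(x(t))$ is non-increasing for \emph{all} $t$ does not follow from what you have shown. At the step from $t_{k+1}-1$ to $t_{k+1}$ you would need $V_j(F_i(z))\le V_i(z)$ with $z=x(t_{k+1}-1)$, and neither (\ref{eqn:C1}) nor (\ref{eqn:C2}) provides this; indeed the paper's numerical example explicitly exhibits a non-monotone $V(x(t))$. What is true---and enough for your Lyapunov-stability bound---is $W(t)\le W(t_k)$ for every $t\in[t_k,t_{k+1}]$, hence $W(t)\le W(0)$ for all $t$, which still yields $\|x(t)\|\le\sqrt{\bar\lam/\underline\lam}\,\|x(0)\|$.

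Your compactness/homogeneity extraction of a uniform contraction factor $\rho<1$ on $\B_R$, together with your separate treatment of the finitely-many-switches case, is actually more careful than the paper's own proof, which simply asserts the existence of constants $\lam,\mu\in(0,1)$ after (\ref{eqn:xV}) and (\ref{eqn:ML7}) without justifying the uniformity.
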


\begin{proof}
Consider any DT-admissible switching sequence with dwell-time $\tau$ in accordance with Definition \ref{def:admissiblesequence}. Without loss of generality, assume that
$\s(t) = i$ for all $t \in [t_{ k} , t_{ {k+1}} )$ where $ t_{ {k+1}} =  t_{ {k}} +  \Delta_k$ and $\Delta_k \ge \tau$.  At $ t_{ {k+1}}$, system switches to mode $j$ and hence $\s( t_{ {k+1}} ) = j$.
Consider an associated Lyapunov function $V_i(x) = x^T P_i \, x$ for each mode $i \in \I_N$ and define
 $V(x(t)) := x(t)^T P_{\s(t)} x(t)$.
From (\ref{eqn:C1}), it follows that  $V(x(t+1)) - V(x(t))= V_{i}(x(t+1)) - V_i(x(t)) < 0$ is negative definite for all $t \in [t_{ k} , t_{ {k+1}} )$ along an arbitrary trajectory of (\ref{eqn:switch-sat})
and thus there exists a $\lam \in (0,1)$   and $\a >0$ such that
\begin{align}
\label{eqn:xV}
\| x(t) \|_2^2 \le \a \, \lam^{t- t_{ k}} \, V(x(t_{ k})), \quad \forall t \in [t_{ k} , t_{ {k+1}} )
\end{align}
On the other hand, from (\ref{eqn:C2}) it follows that
\begin{align}
V \big( x(t_{ {k+1}} ) \big) &=  \big( F_i^{\Delta_k}( x(t_k) )  \big)^T P_j \, \big( F_i^{\Delta_k}( x(t_k) )  \big) \nonumber  \\
& <  \big( F_i^{(\Delta_k - \tau)} (x  { ( t_k ) }   )  \big)^T  P_i \, \big( F_i^{(\Delta_k - \tau)} (x  { ( t_k ) }   )  \big) \nonumber \\
& <    x(t_k)^T P_i \, x(t_k)  =  V  \big( x(t_k) \big) \label{eqn:ML7}
\end{align}
where the second inequality follows from (\ref{eqn:C1}) and the fact that $\Delta_k - \tau \ge 0$.
Equation (\ref{eqn:ML7}) implies  that there exists a $\mu \in (0,1)$ such that $V(x(t_{ {k+1}} ) )  <  \mu  V ( x(t_k) )$ and thus
\begin{align}
V(x(t_{ {k+1}} ) )  <  \mu^k \,  V ( x(0) ) , \quad \forall k \in \Z^+
\end{align}
This together with (\ref{eqn:xV}) imply that the equilibrium solution $x=0$ of (\ref{eqn:switch-sat}) is asymptotically stable.
\end{proof}

While conditions (\ref{eqn:C1}) and (\ref{eqn:C2}) guarantee asymptotic stability of (\ref{eqn:switch-sat}), they are not tractable due to the existence of nested saturation functions in $F_i^\tau(x)$.
In the following section, the LDI representation of saturation function is explored to transform conditions of Theorem \ref{thm:1}  into linear matrix inequality (LMI) constraints that can be efficiently solved with convex optimization routines.

\section{Main Results}
\label{sec:LDI}

The LDI approach  is generalized in this section  and is used for  estimation of  DOA  of system (\ref{eqn:switch-sat}) under dwell-time switching.
LDI approach uses  auxiliary  terms and exploits their
convex hull to represent the saturation function as summarized in the following lemma:

\begin{lem}
\label{lem:sat-co}
\cite{Hu-Lin-discrete-saturation2002}
For any  $S \in \V_m$,
define $D_S$ to be the $m\times m$ diagonal matrix with
diagonal elements $D_S(j,j)$, whose value is 1 if $\: j \in S$ and
0 otherwise. Also define $D_{S^c} = {I}_m -D_S$. Then,
for all $u \in \mathbb R^m$ and $v \in \mathbb R^m$ such that
$\left|v_j\right| \le 1$ for all $j = 1, \cdots , m$:
\begin{align}
\label{eqn:sat-m}
sat(u) \in co \left\{ D_{S^c} u + D_{S} \, v :
\forall S \in  \V_m \right\}
\end{align}
\end{lem}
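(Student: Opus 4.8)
The plan is to reduce the vector statement to a scalar, coordinate‑wise one and then stitch the coordinates together by a product (tensor) weighting. First I would read off the structure of each candidate vertex: by the definitions of $D_S$ and $D_{S^c}=I_m-D_S$, the $j$‑th entry of $D_{S^c}u+D_S v$ equals $u_j$ when $j\notin S$ and $v_j$ when $j\in S$. Hence, as $S$ ranges over $\V_m$, these $2^m$ vertices realize exactly the entry‑by‑entry choices between $u_j$ and $v_j$.

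The key scalar fact I would establish next is that for each coordinate $j$ there exists $\theta_j\in[0,1]$ with $sat(u_j)=(1-\theta_j)u_j+\theta_j v_j$, and this is where the hypothesis $|v_j|\le 1$ is used. If $|u_j|\le 1$ then $sat(u_j)=u_j$ and $\theta_j=0$ works; if $u_j>1$ then $sat(u_j)=1$ lies in the interval $[v_j,u_j]$ because $v_j\le 1<u_j$, so a valid weight exists, namely $\theta_j=(u_j-1)/(u_j-v_j)\in[0,1]$; the case $u_j<-1$ is symmetric. Thus each entry of $sat(u)$ is a convex combination of the corresponding entries $u_j$ and $v_j$, but with its own weight $\theta_j$.

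The main obstacle, and the heart of the argument, is that these coordinate‑wise weights $\theta_j$ generally differ across $j$, whereas exhibiting $sat(u)$ as a point of the convex hull demands a single family $\{\lambda_S\}_{S\in\V_m}$ of nonnegative weights summing to one over the $2^m$ vertices. I would resolve this with the product construction $\lambda_S=\prod_{j\in S}\theta_j\prod_{j\notin S}(1-\theta_j)$. These are manifestly nonnegative, and $\sum_{S\in\V_m}\lambda_S=\prod_{j=1}^m\big(\theta_j+(1-\theta_j)\big)=1$, so they are legitimate convex weights.

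Finally I would verify $\sum_{S\in\V_m}\lambda_S\,(D_{S^c}u+D_S v)=sat(u)$ one coordinate at a time. Fixing an index $k$ and using the vertex structure from the first step, the $k$‑th entry of the sum is $u_k\sum_{S:\,k\notin S}\lambda_S+v_k\sum_{S:\,k\in S}\lambda_S$. Reading $\{\lambda_S\}$ as the law of independent selections that include index $j$ with probability $\theta_j$, the marginal $\sum_{S:\,k\in S}\lambda_S$ collapses to $\theta_k$ and $\sum_{S:\,k\notin S}\lambda_S$ to $1-\theta_k$; algebraically this is simply factoring out the index‑$k$ term of each product. The $k$‑th entry therefore equals $(1-\theta_k)u_k+\theta_k v_k=sat(u_k)$, which holds for every $k$ and completes the proof.
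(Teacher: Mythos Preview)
Your argument is correct. The paper itself does not supply a proof of this lemma; it simply quotes the result from \cite{Hu-Lin-discrete-saturation2002} and uses it. Your coordinate-wise reduction together with the product weights $\lambda_S=\prod_{j\in S}\theta_j\prod_{j\notin S}(1-\theta_j)$ is exactly the standard proof given in that reference, so there is nothing to compare against within the present paper.
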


To illustrate the main idea of the LDI approach,  consider  any $u \in \R^2$ as an example.
According to Lemma \ref{lem:sat-co}, for any 
 $v = [ v_1 , v_2 ]^T \in \R^2$ 
such that  $|v_1| \le 1 ,  |v_2| \le 1$, it follows that
$$sat\left( \begin{bmatrix}
              u_1 \\
              u_2 \\
            \end{bmatrix}
 \right) \in co \left\{ \begin{bmatrix}
                          u_1 \\
                          u_2 \\
                        \end{bmatrix} , \begin{bmatrix}
                          u_1 \\
                          v_2 \\
                        \end{bmatrix} , \begin{bmatrix}
                          v_1 \\
                          u_2 \\
                        \end{bmatrix} , \begin{bmatrix}
                          v_1 \\
                          v_2 \\
                        \end{bmatrix}
 \right\}. $$
In other words, the above lemma states that $sat(u)$ can be expressed as a convex hull
of vectors formed by choosing some rows (those belonging to $S$) from $v$
and the rest (those belonging to $S^c$) from $u$.
Using (\ref{eqn:sat-m}) and assuming that $u = K_i x$ and $v$ is replaced by some linear function $H_i \,x$,
it follows that
\begin{align}
\label{eqn:FxKiLDI}
&sat(K_i \, x) \in co \left\{ D_{S^c} K_i \, x + D_{S} \, H_i \, x : \forall S \in  \V_m \right\}
\end{align}
for all $x \in \L(H_{i}) := \{x : | H_i^{j \bullet} x |  \le  1\} =
\{x : \| H_ix \|_\infty  \le  1\}$. 
Now, for a given $S \in \V_m$ define
\begin{align} \label{eqn:FxLDI}
E_{i, H_{i}} \big( x , S \big):= \Big( A_{i} + \sum_{j \in S^c}
B_{i}^{\bullet j} K_{i}^{j \bullet } \Big) x +  \Big( \sum_{j \in S} B_{i}^{\bullet j} H_{i}^{j \bullet }  \Big) x
\end{align}
and it follows from (\ref{eqn:Fi}), (\ref{eqn:FxKiLDI}) and  (\ref{eqn:FxLDI}) that for every $x \in \L(H_{i})$
\begin{align}
\label{eqn:Fixco}
&F_{i}(x) 
\in co \{ E_{i,H_{i}} (x,S) :  \forall S \in \V_{m} \}
\end{align}


While the LDI representation of $F_i(x)$ appeared in (\ref{eqn:C1}) is easily obtained from Lemma \ref{lem:sat-co},
the characterization of $F_i^\tau(x)$ appeared in  (\ref{eqn:C2}) is difficult as it consists of $\tau$ nested saturation functions.
The rest of this section describes the
characterization of $F_i^t(x)$ by introducing $t$ auxiliary variables $H_{i,1} , \cdots , H_{i,t}$.
Each of these variables
are introduced for LDI representation of one of the nested saturations.
%

Consider $F_i^2(x)$ and suppose that
$H_{i,1}$ and $H_{i,2}$ are associated for LDI representation of $sat(K_{i} x)$ and  $sat( K_{i} F_{i}(x) )$, respectively. Define
\begin{align} \label{eqn:GHxS}
&E_{i, H_{i,2}} \big( F_i(x) , S \big) := \nonumber \\
& \Big( A_{i} + \sum_{j \in S^c}
B_{i}^{\bullet j} K_{i}^{j \bullet } \Big) F_i(x) + \Big( \sum_{j \in S} B_{i}^{\bullet j} H_{i,2}^{j \bullet } \Big) x \quad
\end{align}
Then,  from (\ref{eqn:FxLDI})-(\ref{eqn:GHxS}), it follows that
\begin{align}
&F_{i}(x) \in co \{ E_{i,H_{i,1}} (x,S_{1}) : \forall S_{1} \in \V_{m} \},    \forall   x \in \L(H_{i,1})   \label{eqn:Fi-E1}\\
&F_{i}^{2} (x) = F_i (F_i(x)) \in co \{ E_{i, H_{i,2}} ( F_{i}(x),S_{2}) : \forall S_{2} \in \V_{m}\},  \nonumber \\
& \qquad \qquad \qquad \qquad \qquad \qquad \qquad \quad \;\; \forall x \in \L(H_{i,2}) \label{eqn:Fi-E2}
\end{align}
Since $F_{i}^{2} (x)$ is represented by the convex-hull of $E_{i, H_{i,2}} ( F_{i}(x),S_{2})$,
and $F_{i}(x)$ is by itself a convex-hull of $E_{i,H_{i,1}} (x,S_{1})$,  
it is straightforward (see Lemma \ref{lem:co_co} in the Appendix)
to expand  $F_{i}^{2} (x)$ as
\begin{align}
\label{eqn:Fi2H}
F_{i}^{2} (x)  \in co \{ E_{i, H_{i,2}} \big( E_{i ,H_{i,1}} (x, S_{1}) , S_{2} \big) : \forall S_{1} , S_{2} \in \V_{m} \}, \nonumber \\
\qquad  \forall x \in \L(H_{i,1}) \cap \L(H_{i,2}).
\end{align}

An example that illustrates this is given next.
Consider a single-input system  where $m = 1$ and hence $\V_{m} = \{ \{\emptyset\}, \{1\} \}$.
From (\ref{eqn:Fi2H}), it follows that
$ E_{i, H_{i,2}} \big( E_{i ,H_{i,1}}(x, S_{1}) , S_{2} \big)$ takes one of the following four expressions,  depending on the values of $S_1, S_2 \in \V_m$:


{\small
\begin{align}
&S_1 = \{\emptyset\},S_2 = \{\emptyset\} :  \; E_{i, H_{i,2}} \big( E_{i ,H_{i,1}} (x, \{\emptyset\}) , \{\emptyset\} \big) =  \nonumber \\
& \hspace{57mm} (A_{i} + B_{i} K_{i} )^{2}   x  \label{affine1} \\
&S_1 = \{\emptyset\},S_2 = \{1\}:  \; E_{i, H_{i,2}} \big( E_{i ,H_{i,1}} (x, \{\emptyset\}) , \{1\} \big) =  \nonumber \\
& \hspace{38 mm} A_i( A_i+B_iK_i)x   + B_i \, H_{i,2}  \, x \\
&S_1 =  \{1\},S_2 = \{\emptyset\}:  \; E_{i, H_{i,2}} \big( E_{i ,H_{i,1}} (x, \{1\}) , \{\emptyset\} \big) =  \nonumber \\
& \hspace{19.5 mm} ( A_i+B_iK_i)A_i \, x+ ( A_i+B_iK_i)B_i  \, H_{i,1} \,  x  \\
&S_1 = \{1\},S_2 = \{1\} :  \; E_{i, H_{i,2}} \big( E_{i ,H_{i,1}} (x, \{1\}) , \{1\} \big) =  \nonumber \\
& \hspace{34 mm} A_i^{2} \, x+ A_i B_i \, H_{i,1} \,x  + B_i \, H_{i,2} \, x \label{affine4}
\end{align}
}


Note that each one of the above expressions is an affine  function of $H_{i,1} x$, $H_{i,2}x$. Therefore, $F_i^2(x)$ which is the convex-hull of them, is also an affine  function of $H_{i,1}x$ and  $H_{i,2}x$. This  is a key property used for the conversion of condition (\ref{eqn:C2}) into an LMI (see Section \ref{sec:LMI-convert}).

Similar to the above procedure, by  associating auxiliary matrices $H_{i,1}$, $H_{i,2}$, $\cdots$, $H_{i,t}$ to  each one of the nested saturation functions appeared in $F_{i}^{t} (x)$, it follows that
\begin{align}
\label{eqn:Ft-Et}
F_{i}^{t} (x) \in co \Big\{
E_{i, H_{i,t}} ( \cdots (E_{i ,H_{i,1}} (x, S_{1}), \cdots) , S_{t} ): \quad \; \nonumber \\
  \forall S_{1} , \cdots , S_{t} \in \V_{m} \Big\},
 \; \forall x \in \L(H_{i,1}) \cap \cdots \cap \L(H_{i,t}).
\end{align}
To simplify the  notations of $F_i^t(x)$,  let
\begin{align}
&E_{i} \, (x,S_1) :=  E_{i ,H_{i,1}} (x, S_{1})  \nonumber  \\
&E_{i}^{2}(x,S_1,S_2) := 
E_{i, H_{i,2}} \big( E_{i ,H_{i,1}} (x, S_{1}) , S_{2} \big) \nonumber  \\
& \qquad \vdots  \nonumber \\
&E_{i}^{t}(x,S_1,\cdots,S_t) := 
 E_{i, H_{i,t}} ( \cdots (E_{i ,H_{i,1}} (x, S_{1}), \cdots) , S_{t} )
\label{eqn:Et}
\end{align}
With these notations,
the following theorem 
provides an estimate of  DOA of (\ref{eqn:switch-sat}).
\begin{thm}
\label{thm:2-sat}
Suppose for some $\tau \ge 1$, there exist a collection of $P_i \succ 0$ and matrices $H_{i,1}, H_{i,2}, \cdots, H_{i,\tau} \in \R^{m \times n}$ for each $i \in \I_{N}$ such that
\begin{align}
&\big[ E_{i} (x,S_1) \big]^{T} P_i \big[  E_{i} (x,S_1) \big]  -  x^{T} P_i \, x <  0  \qquad \;\; \nonumber \\
&\hspace{35 mm} \forall x \neq 0, \forall i \in \I_{N},  \forall S_1 \in \V_m   \label{eqn:MLF-E1} \\
&\big[ E_{i}^{\tau} (x,S_1,\cdots,S_\tau) \big]^{T} P_j \big[  E_{i}^{\tau} (x,S_1,\cdots,S_\tau) \big] -  x^{T} P_i \, x <  0     \nonumber \\
& \hspace{17 mm}   \forall x \neq 0, \forall i \neq j \in \I_N,  \forall S_1, \cdots, S_\tau \in \V_m  \label{eqn:MLF-E2} \\
&\E(P_i) \subseteq  \L (H_{i,t}) \hspace{15 mm} \forall i \in \I_{N}  ,  t =  1, 2, \cdots,  \tau
\label{eqn:LH}
\end{align}
Then,
(i) the origin of the saturated system (\ref{eqn:switch-sat}) with dwell-time $\tau$ is locally asymptotically  stable;
(ii) $\Psi := \bigcap_{i\in\I_N} \E(P_i)$ is inside the DOA of (\ref{eqn:switch-sat}).
\end{thm}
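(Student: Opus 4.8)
The plan is to show that the vertex inequalities (\ref{eqn:MLF-E1}) and (\ref{eqn:MLF-E2}) imply local versions of conditions (\ref{eqn:C1}) and (\ref{eqn:C2}) of Theorem \ref{thm:1}, valid on the ellipsoids $\E(P_i)$, and then to replay the Lyapunov argument of Theorem \ref{thm:1} while checking that the trajectory never leaves the region where the LDI representation holds. The workhorse is convexity of the quadratic form $y\mapsto y^{T}Py$ for $P\succ 0$: if $y\in co\{y_1,\dots,y_K\}$ then $y^{T}Py\le \max_k y_k^{T}Py_k$, so a strict inequality holding at every vertex passes to the entire convex hull. Because condition (\ref{eqn:LH}) guarantees $\E(P_i)\subseteq \L(H_{i,t})$ for every $t=1,\dots,\tau$, the inclusions (\ref{eqn:Fixco}) and (\ref{eqn:Ft-Et}) are available for every $x\in\E(P_i)$.

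First I would establish invariance of each $\E(P_i)$ under its own mode. For $x\in\E(P_i)$, inclusion (\ref{eqn:Fixco}) gives $F_i(x)\in co\{E_{i}(x,S_1):S_1\in\V_m\}$; combining (\ref{eqn:MLF-E1}) with the convexity bound yields $F_i(x)^{T}P_iF_i(x)<x^{T}P_ix\le 1$, so $F_i(x)\in\E(P_i)$ and $V_i$ strictly decreases. Iterating, $F_i^{t}(x)\in\E(P_i)$ for all $t\ge 0$, which is the local form of (\ref{eqn:C1}).

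Next I would treat a single dwell interval. Suppose mode $i$ is active on $[t_k,t_{k+1})$ with $\Delta_k:=t_{k+1}-t_k\ge\tau$ and $x(t_k)\in\E(P_i)$. Writing $z:=F_i^{\Delta_k-\tau}(x(t_k))$, the invariance just proved gives $z\in\E(P_i)$, so by (\ref{eqn:Ft-Et}) we have $F_i^{\tau}(z)\in co\{E_i^{\tau}(z,S_1,\dots,S_\tau)\}$; the convexity bound together with (\ref{eqn:MLF-E2}) then yields
$$V_j\big(x(t_{k+1})\big)=\big(F_i^{\tau}(z)\big)^{T}P_j\big(F_i^{\tau}(z)\big)<z^{T}P_iz\le V_i\big(x(t_k)\big)\le 1,$$
where $j=\s(t_{k+1})$. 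Hence $x(t_{k+1})\in\E(P_j)$ and, exactly as in (\ref{eqn:ML7}), $V(x(t_{k+1}))<\mu\,V(x(t_k))$ for some $\mu\in(0,1)$. Starting from $x(0)\in\Psi\subseteq\E(P_{\s(0)})$ and inducting on $k$, the hypothesis $x(t_k)\in\E(P_{\s(t_k)})$ is reproduced at every switch, so the trajectory remains inside the active ellipsoid for all time and the LDI representation stays valid throughout. The geometric decay of $V$ across switches together with the (\ref{eqn:xV})-type decay within intervals then gives local asymptotic stability, proving (i); since the whole forward orbit starting in $\Psi$ stays bounded and converges to the origin, $\Psi$ lies in the DOA, proving (ii).

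The main obstacle is the coupling between validity and invariance: the inclusions (\ref{eqn:Fixco}) and (\ref{eqn:Ft-Et}) only hold on $\bigcap_t\L(H_{i,t})$, yet they are precisely the tool used to prove that the state stays in that set. The argument must therefore be organized inductively so that invariance on $[t_k,t_{k+1})$ is derived from (\ref{eqn:MLF-E1}) alone (which at each step needs only $x\in\L(H_{i,1})$, supplied by $\E(P_i)\subseteq\L(H_{i,1})$), while the cross-mode decrease at the switch applies (\ref{eqn:MLF-E2}) to $z$, whose membership in $\E(P_i)$---and hence in every $\L(H_{i,t})$---has already been secured. Condition (\ref{eqn:LH}) is exactly what closes this loop.
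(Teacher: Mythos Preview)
Your proposal is correct and follows essentially the same route as the paper: use (\ref{eqn:LH}) to validate the LDI inclusions (\ref{eqn:Fixco}) and (\ref{eqn:Ft-Et}), push the vertex inequalities (\ref{eqn:MLF-E1})--(\ref{eqn:MLF-E2}) through the convex hull via convexity of $y\mapsto y^{T}Py$, and then replay the Lyapunov argument of Theorem~\ref{thm:1}. Your inductive organization---proving $\E(P_i)$ is forward-invariant under mode $i$ first, and only then applying (\ref{eqn:MLF-E2}) to the point $z=F_i^{\Delta_k-\tau}(x(t_k))\in\E(P_i)$---is in fact a cleaner treatment of the validity/invariance bootstrap than the paper's final paragraph, which asserts that $x(t)$ stays in $\bigcap_{i}\bigcap_{t}\L(H_{i,t})$ while simultaneously allowing $x(t)\notin\Psi$; what is actually needed (and what you prove) is only that $x(t)\in\E(P_{\sigma(t)})\subseteq\bigcap_{t}\L(H_{\sigma(t),t})$ for the \emph{active} mode.
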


\begin{proof}
It is sufficient to show that for every $x \in \Psi$, equations (\ref{eqn:MLF-E1})-(\ref{eqn:LH}) imply (\ref{eqn:C1}) and (\ref{eqn:C2}).
To see this,
consider any arbitrary $x \in  \Psi = \cap_{i \in \I_N} \E(P_i)$. From
(\ref{eqn:LH}) it follows  that $x$ is inside the polyhedral region $ \L(H_{i,1}) \cap \cdots  \cap \L(H_{i,\tau})$ for all $i \in \I_N$.
This and (\ref{eqn:Fi-E1}), imply  that for every $x \in \Psi$,
$F_i(x) = \sum_{S_1 \in \V_m} \delta_{S_1} E_i(x,S_1)$,
 for some  $\delta_{S_1} \ge 0$ for each $S_1 \in \V_m$ such that $\sum_{S_1 \in \V_m} \delta_{S_1}=1$.
Since $E_i(x,S_1)^T P_i E_i(x,S_1)$ is a convex function,  we have
\begin{align*}
F_i(x)^T P_i F_i(x) &= \big[\sum_{S_1} \delta_{S_1} E_i(x,S_1) \big]^T P_i \big[ \sum_{S_1} {\delta_{S_1}} E_i(x,S_1) \big] \\
& \le \sum_{S_1} \delta_{S_1} \big[ E_i(x,S_1)^T P_i \, E_i(x,S_1) \big] \\
&< \sum_{S_1} \delta_{S_1} (x^T P_i \, x)   =  x^T P_i \, x
\end{align*}
where the last inequality follows from (\ref{eqn:MLF-E1}).

Similarly, from  (\ref{eqn:Ft-Et}) and (\ref{eqn:LH}) it is inferred that
$F_i^\tau(x) = \sum_{S_1,\cdots,S_\tau} \delta_{S_1,\cdots,S_\tau} E_i^\tau(x,S_1,\cdots,S_\tau)$,
for some $\delta_{S_1,\cdots,S_\tau} \ge 0$, $S_1 , \cdots, S_\tau \in \V_m$ such that $\sum_{S_1,\cdots,S_\tau} \delta_{S_1,\cdots,S_\tau}=1$.
Then from convexity of $E_i^\tau(x,S_1,\cdots,S_\tau)^T P_j E_i^\tau(x,S_1,\cdots,S_\tau)$ and (\ref{eqn:MLF-E2}), we have
${[F_i^\tau(x)]}^T P_j [F_i^\tau(x)]
\le \sum \delta_{S_1,\cdots,S_\tau} \big[ E_i^\tau(x,S_1,\cdots,S_\tau)^T P_j \, E_i^\tau(x,S_1,\cdots,S_\tau)^\tau \big]
< \sum \delta_{S_1,\cdots,S_\tau} (x^T P_i \, x  \big) =  x^T P_i \, x$.


Note that for every $x(0) \in \Psi$, $x(t)$ may move outside the $\Psi$ but condition (\ref{eqn:MLF-E2}) enforce that $x(t_1)$ (after the first switching) be inside  $ \mu \Psi$ for some $\mu \in (0,1)$.  In addition, condition (\ref{eqn:MLF-E1}) ensures that $x(t)$ remains inside the union of ellipsoids $\cup_{i \in \I_N} \E(P_i)$ for all $t$. This,  (\ref{eqn:MLF-E2}) and (\ref{eqn:LH}) together,  imply that $x(t)$ is inside polyhedral regions $\bigcap_{i \in \I_N} \big( \L(H_{i,1}) \cap \cdots, \L(H_{i,\tau}) \big)$ for all $t \in \Z^+$ and hence LDI representation of (\ref{eqn:Ft-Et}) is valid at all times.
\end{proof}


\begin{rem}
In the limiting case where  $\tau = 1$, $\s(\cdot)$ becomes an arbitrary switching function  and  the conditions of Theorem  \ref{thm:2-sat} retrieves the stability results appeared in the literature  for saturated  systems under arbitrary switching (see e.g.  \cite{Benzaouia2004,Lu2008}).
\end{rem}

\begin{rem}
Let $\bar A_i = A_i+B_i K_i$. Then, the conditions of Theorem \ref{thm:2-sat} in the absence of saturation become
\begin{align}
&\bar A_{i}^{T} P_i \,\bar A_{i}  - P_i  \prec  0,  \;  \forall  i   \label{eqn:LMI-no-sat1} \\
&\big[ \bar A_{i}^{\tau}  \big]^{T} P_j \big[  \bar A_{i}^{\tau}  \big]
- P_i  \prec  0,    \forall  i \neq j   \label{eqn:LMI-no-sat2}
\end{align}
which is the stability condition for (unsaturated) switched system appeared in \cite{Geromel20062}.
Thus, there indeed exist $P_i \succ 0$ and $H_{i,1}, \cdots, H_{i,2 \tau-1}$ that satisfy (\ref{eqn:MLF-E1})-(\ref{eqn:MLF-E2}) so long as LMI (\ref{eqn:LMI-no-sat1})-(\ref{eqn:LMI-no-sat2}) for system in the absence of saturation has a solution.
This also signifies assumption (A2).
\end{rem}

\subsection{LMI Formulation and Enlarging the Domain of Attraction}
\label{sec:LMI-convert}

The estimate of DOA of  system (\ref{eqn:switch-sat}) obtained from Theorem \ref{thm:2-sat} is the intersection of ellipsoidal sets $\E(P_i)$.
To enlarge the DOA, one must chose auxiliary matrices $H_{i,1}, \cdots, H_{i,\tau}$ and $P_i$'s such that the volume of
$\cap_{i\in \I_N} \E(P_i)$ is maximized. This can be done by solving the following constrained optimization problem:
\begin{align*}
\max_{P_i \succ 0,H_{i,1}, \cdots, H_{i,\tau}}  \text{volume }  \E(P_i) \nonumber \\
s.t. \; (\ref{eqn:MLF-E1}) ,  (\ref{eqn:MLF-E2}) \text{ and } (\ref{eqn:LH}).
\end{align*}

In the sequel, we describe how to transform the above optimization problem 
into
Linear Matrix Inequalities (LMIs) that can be  efficiently solved with LMI solvers (see e.g. \cite{cvx}).

The key point for this conversion is that $E_{i}^{t}(x,S_1,\cdots,S_t)$ for given $S_{1}, S_{2}, \cdots, S_{t} \in \V_{m}$,
is an affine function of variable $H_{i,1}x, \cdots, H_{i,t}x$. This means that $E_{i}^{t}(x,S_1,\cdots,S_t)$ can be rewritten as
\begin{align}
\label{Ei-affine}
&E_{i}^{t}(x,S_1,\cdots,S_t) = \Theta_{i,0}(S_1,\cdots,S_t)x +  \nonumber \\
& \hspace{5mm} \Theta_{i,1}(S_1,\cdots,S_t) H_{i,1} \,x
 +  \cdots +  \Theta_{i,t}(S_1,\cdots,S_t) H_{i,t} \, x
\end{align}
where $\Theta_{i, \cdot}(S_1,\cdots,S_t)$'s are only functions of $A_{i}, B_{i}, K_{i}$
(see e.g. (\ref{affine1})-(\ref{affine4}) for the expressions of $\Theta_{i,0}(S_1,S_2)$, $\Theta_{i,1}(S_1,S_2)$, $\Theta_{i,2}(S_1,S_2)$ for different values of $S_1$ and $S_2$).
Hereafter, the dependence of $\Theta_{i, t}$ on $(S_1, \cdots, S_t)$ is dropped for notational convenience unless needed.

Now, to transform (\ref{eqn:MLF-E2}) into an LMI constraint,
pre- and post-multiply it by $P_i^{-1}$. It follows that
\begin{align}
\label{eqn:LMI-interm}
&x^{T} \Big[ P_i^{-1} (\Theta_{i,0}  +  \cdots +  \Theta_{i,\tau} H_{i,\tau})^{T} P_j (\Theta_{i,0} +  \cdots +  \Theta_{i,\tau} H_{i,\tau})  P_i^{-1} \nonumber \\
& \hspace{10 mm} - P_i^{-1} \Big] x < 0 \qquad \forall x \neq 0 , \forall i \neq j
\end{align}
Let $Q_i = P_i^{-1}, Y_{i,1} = H_{i,1} P_i^{-1}, \cdots, Y_{i,\tau} = H_{i,\tau} P_i^{-1}$.
Then,  (\ref{eqn:LMI-interm})  is equivalent to
\begin{align*}
& \big( \Theta_{i,0} Q_i    +  \cdots +  \Theta_{i,\tau} Y_{i,\tau}  \big)^{T} Q_j^{-1}
\big( \Theta_{i,0} Q_i     + \cdots +  \Theta_{i,\tau} Y_{i,\tau} \big) \nonumber \\
& \hspace{55 mm} - Q_i \prec 0 \qquad  \forall i \neq j
\end{align*}
 Utilizing the Schur complement, this can be converted into
\begin{align}
\label{eqn:Lyap-LMI}
& \left[\begin{array}{cc} Q_i & * \\ \Theta_{i,0} Q_i +  \cdots +  \Theta_{i,\tau} Y_{i,\tau}  & Q_j \end{array}\right] \succ 0 & \forall i \neq j
\end{align}
where $*$ denotes the transpose of the off-diagonal term  and (\ref{eqn:Lyap-LMI}) is now an LMI in terms of the variables $Q_i, Q_j, Y_{i,1}, Y_{i,2}, \cdots, Y_{i,\tau}$.
Using the same procedure, constraint (\ref{eqn:MLF-E1}) is equivalent to
\begin{align}
\label{eqn:Lyap-LMI-1}
& \left[\begin{array}{cc} Q_i & * \\ \Theta_{i,0} Q_i +   \Theta_{i,1} Y_{i,1}  & Q_i \end{array}\right] \succ 0 & \forall i
\end{align}
Constraint (\ref{eqn:LH}) is also equivalent to the following  LMI constraints \cite{Hu-Lin-discrete-saturation2002}:
\begin{align}
\label{eqn:LH-LMI}
& \; \E(P_i) \subseteq \bigcap_{i \in \I_{N}, t \in \{ 1, \cdots, \tau\} }\L(H_{i,t}) \; \Leftrightarrow  \nonumber  \\
& \left[\begin{array}{cc} 1 & Y_{i,t}^{j \bullet } \\  * & Q\end{array}\right] \succeq 0  , \qquad \forall j \in \{ 1 , \cdots, m \} , \forall i \in \I_{N} , \nonumber  \\
& \qquad \qquad \qquad \qquad \quad \;\; \forall t \in \{ 1, \cdots, 2\tau-1\}
\end{align}
where $Y_{i,t}^{j \bullet }$ is the $j$-th row of $Y_{i,t}$.

Finally,  by using  $tr(P_i^{-1})$ as a measure of size of the ellipsoid $\E(P_i)$,
the following corollary provides an approach for enlarging the DOA of (\ref{eqn:switch-sat}).

\begin{cor}
\label{cor:3}
Suppose that for some  $\tau \ge1$, there exist  matrices $Q_i \succ 0$ and  $Y_{i,1}, \cdots, Y_{i,\tau}$ for $i = 1,2, \cdots, N$ such that the following linear matrix inequalities (LMIs) hold:
\begin{equation}
\label{eqn:LMI-DOA}
\left\{
\begin{array}{l}
\max_{Q_i \succ 0 ,Y_{i,1}, \cdots, Y_{i,\tau}} \; \sum_{i=1}^N  tr(Q_i)  \\
\\
\left[\begin{array}{cc} Q_i &  * \\ \Theta_{i,0}(S_1) Q_i +\Theta_{i,1}(S_1) Y_{i,1}  & Q_i \end{array}\right] \succ 0   \\
\hspace{60 mm}  \forall i , \forall S_1 \in \V_m \\
 \\
\left[\begin{array}{cc} Q_i & * \\ \Theta_{i,0}(S_1,\cdots, S_\tau) Q_i  +  \cdots +  \Theta_{i,t}(S_1,\cdots, S_\tau) Y_{i,\tau}  & Q_j \end{array}\right] \succ 0 \\
\hspace{40 mm}  \forall i \neq j , \forall S_1,\cdots, S_\tau \in \V_m\\
 \\
\left[\begin{array}{cc} 1 & Y_{i,t}^{j \bullet } \\  * & Q_i\end{array}\right] \succeq 0
\hspace{2mm} \forall i , \forall t \in \{1, \cdots, \tau\}, \forall j \in \{ 1 , \cdots, m \} \\
 \end{array}
\right.
\end{equation}
Then, the origin of switched system (\ref{eqn:switch-sat}) is locally asymptotically stable with dwell-time $\tau$ and  $\Psi = \bigcap_{i \in \I_N} \E(Q_i^{-1})$ is the estimate of DOA. The auxiliary matrices  $H_{i,t}$  are  obtained
from $H_{i,t} = Y_{i,t} P_i$ with $P_i = Q_i^{-1}$.

\end{cor}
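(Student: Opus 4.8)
The plan is to recognize that Corollary \ref{cor:3} is merely the LMI restatement of Theorem \ref{thm:2-sat}, so the proof reduces to verifying that feasibility of the LMIs in (\ref{eqn:LMI-DOA}) is equivalent to conditions (\ref{eqn:MLF-E1})--(\ref{eqn:LH}) under the change of variables $P_i = Q_i^{-1}$ and $H_{i,t} = Y_{i,t} P_i$, and then invoking Theorem \ref{thm:2-sat}. First I would fix a feasible point $(Q_i, Y_{i,1}, \cdots, Y_{i,\tau})$ of (\ref{eqn:LMI-DOA}), set $P_i = Q_i^{-1} \succ 0$ and $H_{i,t} = Y_{i,t} Q_i^{-1}$, and show that each family of LMIs translates back to the corresponding quadratic condition of Theorem \ref{thm:2-sat}.

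For the second block of (\ref{eqn:LMI-DOA}), I would apply the Schur complement (legitimate since $Q_j \succ 0$) to obtain $\Phi^T Q_j^{-1} \Phi - Q_i \prec 0$ with $\Phi := \Theta_{i,0} Q_i + \cdots + \Theta_{i,\tau} Y_{i,\tau}$, and then perform the congruence transformation by $Q_i^{-1} = P_i$ to arrive at $\Theta^T P_j \Theta - P_i \prec 0$, where $\Theta := \Theta_{i,0} + \Theta_{i,1} H_{i,1} + \cdots + \Theta_{i,\tau} H_{i,\tau}$. The bookkeeping here hinges on the substitution $Y_{i,t} = H_{i,t} Q_i$, which gives $\Phi = \Theta Q_i$. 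By the affine representation (\ref{Ei-affine}), $E_i^\tau(x, S_1, \cdots, S_\tau) = \Theta x$, so the matrix inequality is exactly $[E_i^\tau(x,S_1,\cdots,S_\tau)]^T P_j [E_i^\tau(x,S_1,\cdots,S_\tau)] - x^T P_i x < 0$ for all $x \neq 0$, i.e. (\ref{eqn:MLF-E2}). The first block of (\ref{eqn:LMI-DOA}) is handled identically to recover (\ref{eqn:MLF-E1}), and the third block reproduces (\ref{eqn:LH}) via the equivalence already recorded in (\ref{eqn:LH-LMI}). With (\ref{eqn:MLF-E1})--(\ref{eqn:LH}) in hand, Theorem \ref{thm:2-sat} yields both local asymptotic stability of the origin with dwell-time $\tau$ and the inclusion of $\Psi = \bigcap_{i \in \I_N} \E(P_i) = \bigcap_{i \in \I_N} \E(Q_i^{-1})$ inside the DOA.

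Finally, I would note that maximizing $\sum_i tr(Q_i) = \sum_i tr(P_i^{-1})$ acts as a surrogate for enlarging the ellipsoids $\E(P_i)$ and hence the estimate $\Psi$, which justifies the choice of objective. The main obstacle is not any single step but ensuring that every transformation is a genuine equivalence rather than a one-way implication: the Schur complement requires the pivot block ($Q_j$ in the second block, $Q_i$ in the first) to be positive definite, the congruence by $P_i$ preserves sign-definiteness only because $Q_i$ is invertible, and the passage between the strict quadratic inequality $x^T M x < 0$ for all $x \neq 0$ and $M \prec 0$ must be used in both directions. Carefully tracking the affine dependence (\ref{Ei-affine}) through the change of variables, so that $\Phi = \Theta Q_i$ couples the LMI data $\Theta_{i,t}$ to the recovered gains $H_{i,t}$, is the crux of the argument.
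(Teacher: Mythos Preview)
Your proposal is correct and follows essentially the same route as the paper: the text immediately preceding Corollary~\ref{cor:3} derives the LMIs (\ref{eqn:Lyap-LMI})--(\ref{eqn:LH-LMI}) from (\ref{eqn:MLF-E1})--(\ref{eqn:LH}) via the change of variables $Q_i=P_i^{-1}$, $Y_{i,t}=H_{i,t}P_i^{-1}$, a congruence by $P_i^{-1}$, and the Schur complement, and you simply run these equivalences in the reverse direction before invoking Theorem~\ref{thm:2-sat}. Your attention to the invertibility of $Q_i$ and the positive-definiteness of the Schur pivot blocks is exactly what makes each step a two-sided equivalence, so nothing further is needed.
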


\begin{rem}
In the optimization problem (\ref{eqn:LMI-DOA}),  $\sum_i tr(Q_i)$ is optimized
over all possible matrices $H_{i,t}, \cdots, H_{i,\tau}$, including $H_{i,t}=K_{i}$ for all $i\in \I_{N}$ and for all $t = 1, \cdots, \tau$. Hence, the resulting DOA is no smaller than the one tangential to the sides of the unsaturated region, i.e.
$\L_{K} := \cap_{i\in \I_{N}} \{ x : \|K_{i} x \|_{\infty} \le 1\}$.
\end{rem}

\begin{rem}
Any feasible solution of optimization problem (\ref{eqn:LMI-DOA}) with dwell-time $\tau$, is also a feasible solution for optimization problem (\ref{eqn:LMI-DOA})
with any $\bar \tau \ge \tau$.
This  means that $\Psi(\tau)$ is
a DOA of  (\ref{eqn:switch-sat}) with dwell-time $\bar \tau \ge \tau$ and
$\Psi(\tau) \subseteq \Psi(\bar \tau)$.
\end{rem}

\section{Numerical Example}
\label{sec:examaple}
The example considered is a single-input saturated switched system, taken from \cite{Chen-2011}, with $\I_N = \{ 1,2 \}$,
$A_{1} = \left[ \begin{smallmatrix} -0.7 \; & \;\;\, 1.0\\  -0.5 \; & -1.2 \end{smallmatrix} \right]$, $A_2=\left[ \begin{smallmatrix} 0.26\; & -1.0\\  1.7 \; & -1.5 \end{smallmatrix} \right]$,
$B_1=[1 ,\: 0]^T$, $B_2=[0 ,\: -1]^T$,
$K_1=[1.1759, \, 0.1089]$,
$K_2=[1.5114, \, -0.7765]$.

As LMIs (\ref{eqn:LMI-no-sat1})-(\ref{eqn:LMI-no-sat2}) admit a solution with $\tau = 2$, the  system is asymptotically stable with dwell-time $\tau = 2$ and thus assumption (A2) is satisfied for any $\tau \ge 2$. It can also be shown that
the  system is unstable under arbitrary switching and hence the methods proposed  for arbitrary switched systems are not applicable for this example.
The intention here is to compute an estimate of  DOA of the system from Corollary \ref{cor:3} for different values of dwell-time $\tau \ge 2$
 and compare them
  with the results presented in  \cite{Chen-2011}. 

The solution of the optimization problem (\ref{eqn:LMI-DOA})  with  $\tau =2$  are
$P_1 =  \left[ \begin{smallmatrix} 1.0839 & 1.5333  \\  *  & 3.1411 \end{smallmatrix} \right]$,
$P_2 =  \left[ \begin{smallmatrix} \;\;\;1.3408 & -0.7720  \\  *  & \;\;\;1.2585 \end{smallmatrix} \right]$,
$H_{1,1} =[0.8898  ,  0.7467]$, $H_{1,2} =[0.5660  ,  1.5560]$,
$H_{2,1} =[ 1.1270 ,  -0.8560]$, $H_{2,2} = -[0.3050  ,  0.4333]$.
Figure  \ref{fig:1-sat} shows the corresponding ellipsoidal sets $\E(P_1)$ and $\E(P_2)$ and the polyhedral regions $\L(H_{1,1}),  \L(H_{1,2}), \L(H_{2,1}), \L(H_{2,2})$.
Note that $\E(P_1) \subseteq \L(H_{1,1}) \cap \L(H_{1,2})$ and $\E(P_2) \subseteq \L(H_{2,1}) \cap \L(H_{2,2})$ as imposed by (\ref{eqn:LH-LMI}). The DOA together with a  sample trajectory of the system starting from $x(0)$ on the boundary of $\Psi = \E(P_1) \cap \E(P_2)$ under a periodic switching sequence is shown in Fig. \ref{fig:2-sat}.
Note that $x(t)$ may move out  of $\Psi$ (see $x(1), x(3) \notin \Psi$ in Fig.  \ref{fig:2-sat}) but $x(t)$ remains  in $\E(P_1) \cup \E(P_2)$ at all times.
The corresponding Lyapunov function $V(x(t))= x(t)^T P_{\s(t)} x(t)$ is also shown  in this figure.
Again, $V (t)$ is not monotonically decreasing with respect to $t$ but  $V(x(t_k))$ (the
points marked with ``o'') defines a   monotonically decreasing sequence and thus $V(t) \rightarrow 0$ as $t \rightarrow \infty$.

\begin{figure}[h]
\centering
  {\includegraphics[trim = 5mm 2mm 8mm 4mm, clip,width=.4\textwidth]{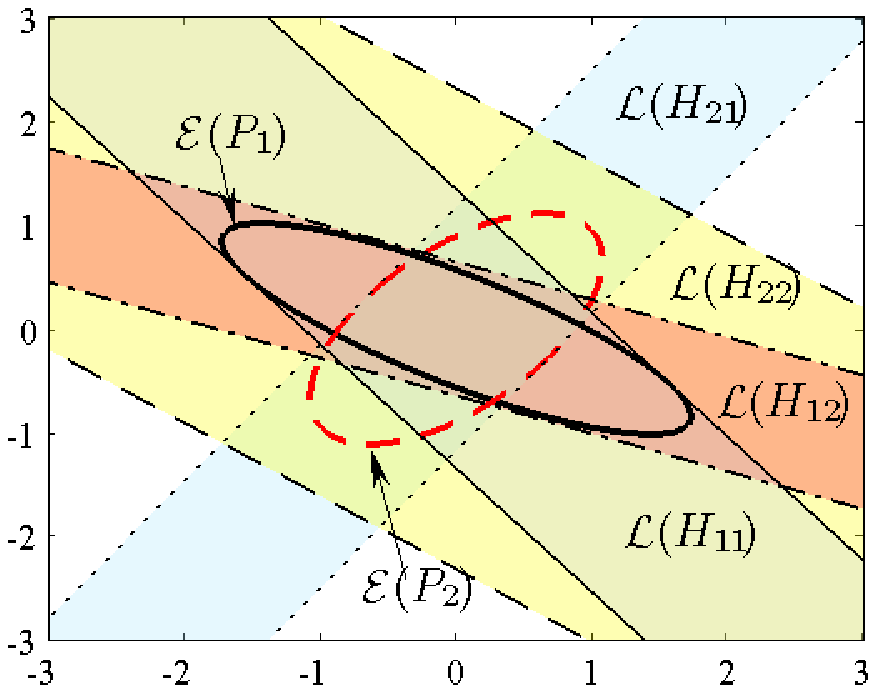}} \hfill
\caption{Illustration of $\Psi = \E(P_1) \cap \E(P_2)$  for $\tau =2$:   $\E(P_1) \subseteq \L(H_{1,1}) \cap \L(H_{1,2})$ and  $\E(P_2) \subseteq \L(H_{2,1}) \cap \L(H_{2,2})$;
}
\label{fig:1-sat}
\end{figure}

\begin{figure}[h]
\centering
  {\includegraphics[trim = 5mm 2mm 8mm 4mm, clip,width=.41\textwidth]{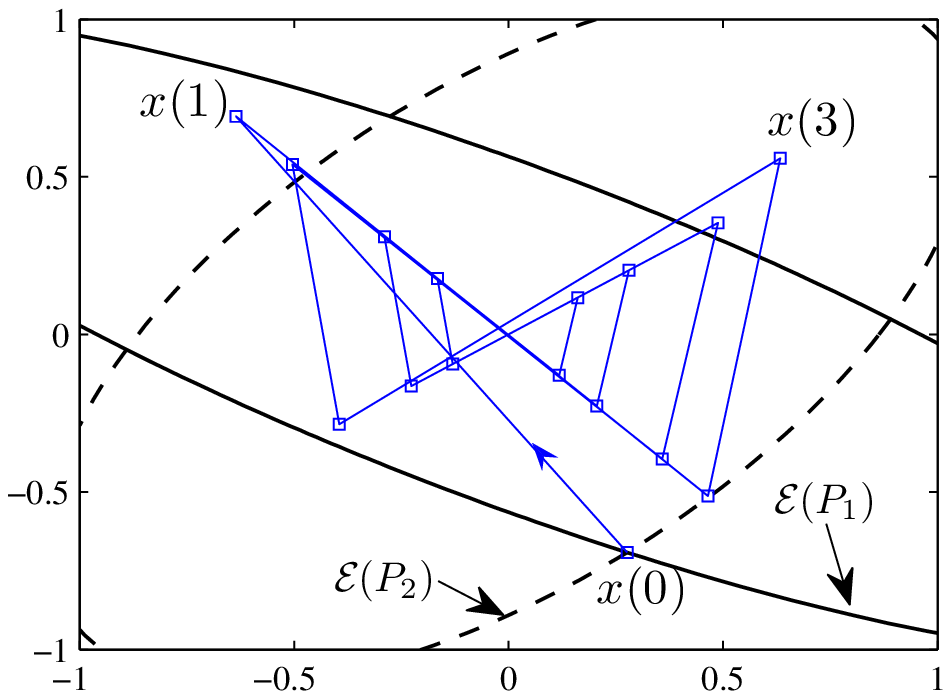}} \\
  {\includegraphics[trim = 7mm 40mm 8mm 2mm, clip,width=.4\textwidth]{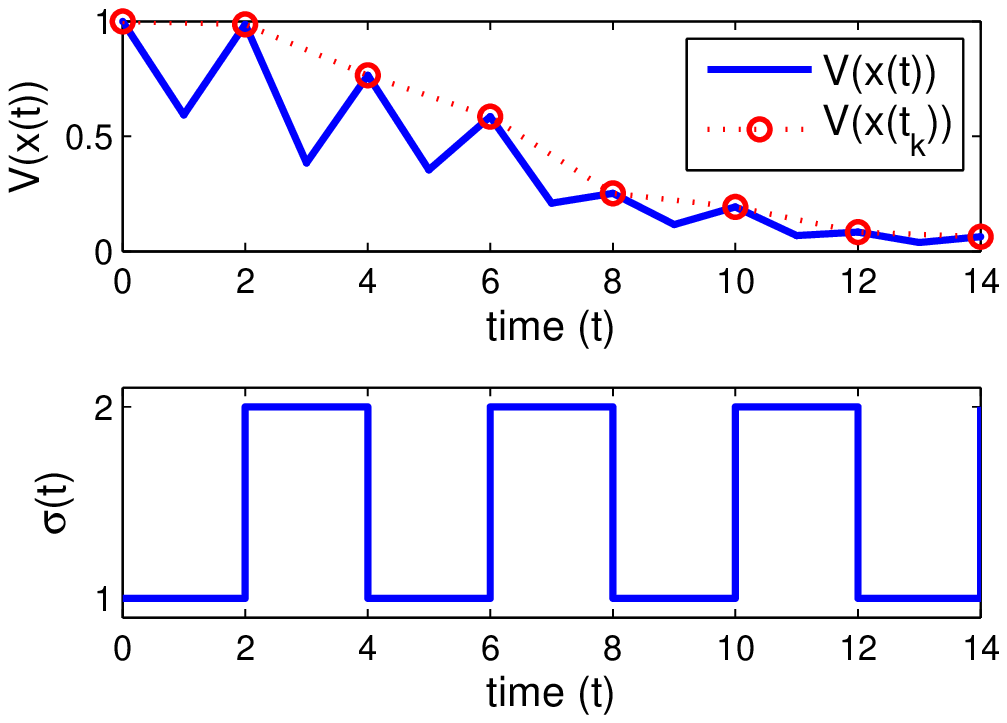}}
  \caption{(top) State trajectory from $x(0) = (0.2763 , -0.6918)$ on the boundary of $\Psi$ under a period switching with $\s(0)=2$, $t_{k+1}-t_k = 2, \forall k$, (bottom) The Lyapunov function $V(x(t))$ and the monotonically decreasing sequence $V(x(t_k))$ at switching times.}
\label{fig:2-sat}
\end{figure}

\subsection{Comparison with other methods}

As a comparison, the authors of \cite{Chen-2011}  use an LDI-based method  to obtain
an estimate of DOA of (\ref{eqn:switch-sat}).
They show that if there exist $\lambda \in (0,1)$, $\mu \ge 1$, $P_i \succ 0$ and $H_i$ for each $i \in \I_N$ such that
\begin{subequations}
\label{eqn:LLL}
\begin{align}
& [(E_{i,H_i} (x,S) )]^T P_i \,  [(E_{i,H_i} (x,S) )]  \le  \lambda \, x^\top P_i \, x  \nonumber \\
&\hspace{45 mm}  \forall i \in \I_N , \forall S \in \V_m \label{eqn:L1}  \\
& P_i  \preceq \mu \, P_j   \hspace{30mm} \forall (i,j) \in \I_N \times \I_N \label{eqn:L2} \\
&\E(P_i) \subseteq \L(H_i) \hspace{37mm}  \forall i \in \I_N \label{eqn:L3}
\end{align}
\end{subequations}
Then, equilibrium solution $x=0$ of (\ref{eqn:switch-sat}) is locally asymptotically  stable with dwell-time $\tau  \ge  \lfloor -\frac{\ln \mu }{\ln \lambda} \rfloor$.
For a fixed $\lambda$, conditions (\ref{eqn:L1}) and (\ref{eqn:L3})
can be easily  converted into LMIs using the same procedure developed in Section \ref{sec:LMI-convert} and
optimized such that the size of $\E(P_i)$'s are maximized.
 Then, an admissible choice of $\mu$ that satisfies (\ref{eqn:L2})  is
$\mu =  \max_{i,j} \frac{\lambda_{\max} (P_i)} {\lambda_{\min} (P_j)}$.  The estimate of DOA of this method is the largest norm-2 ball $\B_r = \{ x : \|x \| \le r \} \subseteq \cap_{i\in \I_N}\E(P_i)$
such that if $x(0) \in \B_r$ then $x(t) \in \cap_{i\in \I_N} \E(P_i)$ for all $t \in \Z^+$.
An admissible choice of $r$ that guarantees this condition is
$r = \min_{i \in \I_N}  \frac{1}{ \sqrt { \lambda_{max}(P_i) } }$.

For the example considered in this section, 
the smallest dwell-time $\tau $ that results in a feasible solution for the optimization problem (\ref{eqn:LLL}) is $\tau = 5$.  The resulting DOA, denoted by  $\B_r$, is shown in  Fig. \ref{fig:3-sat} and
  compared with the DOA obtained from Corollary \ref{cor:3} with $\tau = 5$.

\begin{figure}[h]
\centering
\includegraphics[trim = 8mm 4mm 10mm 6mm, clip,width=.4\textwidth]{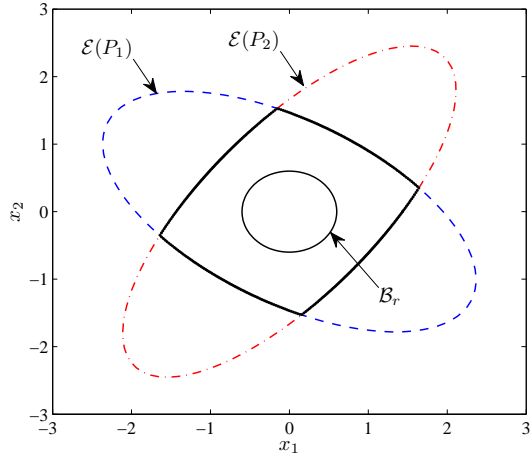}
\caption{Comparison of DOA  for $\tau =5$:  $\B_{r} \subset  \Psi = \E(P_1) \cap \E(P_2)$.}
\label{fig:3-sat}
\end{figure}

Computational results for  different values of $\tau $  are also presented in Table \ref{tab:1sat}. These results include the size of DOA and the total number of LMIs involved in each method.

\begin{table}[h]
\centering
\begin{tabular}{c|cc|cc}
\hline
              &\multicolumn{2}{c|}{Method of \cite{Chen-2011} }& \multicolumn{2}{c}{Corollary \ref{cor:3}} \\
  $\tau$ & $Area(\mathcal B_r)$  & \# LMI & $Area(\Psi)$ & \# LMI  \\
  \hline
  2 	  &  	-    	&   6 & 1.372  & 16   \\
  3 	  &  	-    	&   6 & 3.308  & 26   \\
  4 	  &  	-    	&   6 & 5.788 & 44   \\
  5 	  &  1.131	&   6 & 7.143  &  78  \\
  8 	  &  3.331  &  6  & 10.316  & 532   \\
\hline
\end{tabular}
\caption{Computational results}
\label{tab:1sat}
\end{table}

%
%
%
%
%

%
%
%
%

%
%

%
%
%
%
%


From Table \ref{tab:1sat}, it can be seen that the proposed LDI approach is less conservative,  in terms of both minimal dwell-time needed for stability and the size of DOA, than the LDI method of \cite{Chen-2011}.  This is mainly because the variables  $H_{i,1}, \cdots, H_{i,\tau}$ gives us more freedom to characterize the polytopic representation of the solution of system (\ref{eqn:switch-sat}) and  hence enable us to find a larger estimate of DOA. Of course, this is possible at the expense of a more computational effort as the number of LMI constraints involved in (\ref{eqn:LMI-DOA}) increases exponentially with $\tau$.


\section{Conclusion}
\label{sec:conclusion}

This paper proposes
a sufficient condition for asymptotic stability of discrete-time switched systems under dwell-time switching and in the presence of saturation nonlinearity.
This condition is shown to be
equivalent to linear matrix inequalities (LMIs). As a result,
the estimation of the domain of attraction is  formulated into an optimization problem
with LMI constraints.
Through numerical examples, it  is shown that our results are less conservative than the others, in terms of both minimal dwell-time needed for stability and the size of the obtained domain of attraction.

\addtolength{\textheight}{-14cm}


%

\appendix
\begin{lem}
\label{lem:co_co}
Let $\a \in co \{ \a_{i} : i =1,\cdots, n_\a \}$,
$\b \in co \{ \b_{j}: j =1, \cdots, n_{\b} \}$ and $\gamma = \a + \b$.
Then, $\gamma \in co \{ \a_{i}+\b_{j} : i \in \{1, \cdots, n_\a \} ,  j \in \{1, \cdots, n_\b \} \}$.
\end{lem}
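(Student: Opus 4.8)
The plan is to unpack the two convex-hull memberships into explicit convex combinations and then take the tensor product of their coefficients. First I would use the definition of the convex hull to write $\a = \sum_{i=1}^{n_\a} \lam_i \, \a_i$ and $\b = \sum_{j=1}^{n_\b} \mu_j \, \b_j$, where the coefficients satisfy $\lam_i \ge 0$, $\mu_j \ge 0$, $\sum_{i=1}^{n_\a} \lam_i = 1$ and $\sum_{j=1}^{n_\b} \mu_j = 1$. This is exactly the information supplied by the hypotheses $\a \in co\{\a_i\}$ and $\b \in co\{\b_j\}$, so no extra work is needed to obtain it.

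Next I would propose the product weights $w_{ij} := \lam_i \mu_j$ as the candidate coefficients for expressing $\gamma$ over the family $\{\a_i + \b_j\}$. Two things must then be checked. First, these weights form a valid convex combination: clearly $w_{ij} \ge 0$, and
\begin{align*}
\sum_{i=1}^{n_\a} \sum_{j=1}^{n_\b} w_{ij} = \Big( \sum_{i=1}^{n_\a} \lam_i \Big) \Big( \sum_{j=1}^{n_\b} \mu_j \Big) = 1 .
\end{align*}
Second, these weights reconstruct $\gamma$: distributing the sum and using the two marginal identities gives
\begin{align*}
\sum_{i=1}^{n_\a} \sum_{j=1}^{n_\b} w_{ij} (\a_i + \b_j)
&= \sum_{i} \lam_i \a_i \Big( \sum_j \mu_j \Big) + \sum_{j} \mu_j \b_j \Big( \sum_i \lam_i \Big) \\
&= \sum_{i} \lam_i \a_i + \sum_{j} \mu_j \b_j = \a + \b = \gamma .
\end{align*}
Together these two facts exhibit $\gamma$ as a convex combination of the points $\a_i + \b_j$, which is precisely the claim $\gamma \in co\{\a_i + \b_j\}$.

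Since the statement is a routine fact about Minkowski sums of convex hulls, there is no genuine obstacle here; the only point requiring care is the bookkeeping in the second display, namely that the double sum separates into two single sums whose spurious marginal factors $\sum_j \mu_j$ and $\sum_i \lam_i$ collapse to one. I would present the argument in this order (extract coefficients, define the product weights, verify the two properties) so that the collapsing of the marginals is transparent.
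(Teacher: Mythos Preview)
Your argument is correct and is exactly the standard proof: write $\a$ and $\b$ as convex combinations, take the product weights $w_{ij}=\lam_i\mu_j$, and verify that they are nonnegative, sum to one, and reproduce $\gamma$. The paper itself states this lemma in the Appendix without proof, so there is nothing further to compare against; your write-up already contains more detail than the paper provides.
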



\section*{Acknowledgment}

The financial support of A*STAR grant (Grant No. 092 156 0137) is gratefully acknowledged.

\bibliographystyle{IEEEtran}
\bibliography{referencessat}








\end{document}